\documentclass[11pt]{amsart}
\textheight 21.5cm
 \textwidth 15.8cm
 \evensidemargin -0.1cm
 \oddsidemargin -0.1cm

\usepackage{xcolor}

\newtheorem{theorem}{Theorem}[section]
\newtheorem{lemma}{Lemma}[section]
\newtheorem{proposition}{Proposition}[section]

\theoremstyle{definition}
\newtheorem{definition}{Definition}[section]
\theoremstyle{remark}
\newtheorem{remark}{Remark}[section]
\numberwithin{equation}{section}
\newtheorem{corollary}{Corollary}[section]

\newtheorem{problem}{Problem}[section]

\numberwithin{equation}{section}


\newcommand{\R}{\mathbb R}


\begin{document}

 \title[Critical points]{Maps with finitely many critical points into high dimensional manifolds}
\author[L.Funar]{Louis Funar}
\address{Institut Fourier BP 74, UMR 5582, Laboratoire de Math\'ematiques, 
Universit\'e Grenoble Alpes, CS 40700, 38058 Grenoble cedex 9, France}
\email{louis.funar@univ-grenoble-alpes.fr}

\begin{abstract} Assume that there exists a smooth map between two closed manifolds 
$M^m\to N^k$, where $2\leq k\leq m\leq 2k-1$, with only finitely many singular points, all of which are 
cone-like. 
If $(m,k)\not\in\{(2,2), (4,3), (5,3), (8,5), (16,9)\}$, then $M^m$ admits a locally trivial topological fibration over $N^k$ and there exists a smooth map  $M^m\to N^k$ with at most one critical point. \\

\noindent Keywords: isolated singularity, open book decomposition, Poenaru-Mazur contractible manifold.

\noindent MSC Class: 57R45, 57 R70, 58K05.
\end{abstract}

\maketitle

\section{Introduction and statements}

Let $\varphi(M^m,N^k)$ denote the minimal number of critical points
of a smooth map between the manifolds $M^m$ and $N^k$ of dimensions 
$m\geq k\geq 2$. Church and Timourian asked (see \cite{CT1}, p.617) for a characterization of those
pairs of manifolds for which $\varphi$ is finite non-zero, called {\em nontrivial} here, 
and then to compute its value.

In \cite{AndFun1} the authors found that,
in {\em very small codimension}, namely when $0\leq m-k\leq 3$, if $\varphi(M^m,N^{k})$ is finite, 
then $\varphi(M^m,N^{k})\in\{0,1\}$, except for the
exceptional pairs of dimensions
$(m,k)\in\{(2,2), (4,2), (4,3), (5,2), (6,3), (8,5)\}$.
Moreover, $\varphi(M,N)=1$ if and only if $M$ is diffeomorphic to the connected 
sum of a  smooth locally trivial fibration over $N$ with an exotic sphere while $M$ 
is not a smooth fibration over $N$.

The value of $\varphi$ was computed for (pairs of) surfaces 
in \cite{AFK}. 
The dimensions $(4,3), (8,5)$ and $(16,9)$ were analyzed in \cite{FPZ} where it was 
shown that $\varphi$ can take arbitrarily large (even) values. 
Moreover, all non-trivial isolated singularities  in 
dimensions $(4,3)$ and $(8,5)$ are of a very particular nature, 
namely they are suspension of Hopf fibrations; this permitted to classify the pairs of manifolds 
with finite $\varphi$ in these dimensions (see \cite{F}). 

Nontrivial examples were constructed in \cite{FP}, in all dimensions $(m,k)$ with $m\geq 2k$, also called  the {\em high codimension}. 
Lower bounds for $\varphi(M^m,N^k)$, when $m=2k-2$, have been obtained in \cite{FPZ}  
for many examples, but upper bounds could only be provided for $k\in\{3,5,9\}$.

In \cite{AFK,FP} a relevant invariant  $\varphi_c$, which counts critical points of maps having only 
finitely many singularities which are {\em cone-like}, was introduced. 
In very small codimension $\varphi_c$ agrees with $\varphi$.
Conjecturally this should be true in any dimension (see the last section).

The aim of this paper is to consider the  
{\em small codimension} case, namely  $m\leq 2k-1$ and $k\geq 3$ by showing that 
all pairs with finite $\varphi_c(M,N)$  are topologically trivial, except for the exceptional dimensions. 
The idea of the proof is as follows. We recall that Church and Lamotke (\cite{CL}) provided a general  
construction of non-trivial local isolated singularities in all codimensions  $m-k\geq 4$. 
Their examples  answered in the negative 
a conjecture of Milnor  (\cite{Milnor}, p. 100) claiming that all  isolated singularities 
with $m\leq 2k-3$ are trivial in the sense that their local fibers are homeomorphic to disks, so they are locally topologically equivalent to regular maps. 
We first show that every cone-like isolated singularity 
actually arises by means of their construction, using Haefliger's unknotting Theorem. 
On the other hand we prove that 
cone-like isolated singularities can be removed by disk surgeries. The topology 
of the source manifold changes by connected sums with homotopy spheres.  
Therefore, critical points could contribute by at most one unit to $\varphi_c$. 
This shows that a suitable global version of Milnor's conjecture above holds, namely 
maps $M^m\to N^n$ with finitely many critical points only exist when topological submersions 
$M^m\to N^n$ exist (in these dimensions).


\begin{definition}
A  singularity $p$ of the smooth map $f:M\to N$ is {\em cone-like} 
if $p$ admits a cone neighborhood in the corresponding fiber $V=f^{-1}(f(p))$, i.e. there exists some closed manifold 
$L\subset V-\{p\}$ and a neighborhood $U$ of $p$ in $V$ which is homeomorphic to 
the cone $C(L)$ over $L$. Recall that $C(L)=L\times (0,1]/L\times \{1\}$. 
\end{definition}

Church and Timourian proved  (see \cite{CT2} and another proof in \cite{F}) 
that all isolated singularities in codimensions at most $2$ are cone-like. 
As Takens pointed out in \cite{Takens}, this is not anymore true in codimension 3 or higher. 
However, isolated singularities in codimension 3 (and dimension of the base at least $4$) are 
removable according to \cite{F}.

\begin{theorem}\label{smallcodimension}
Suppose we are in the small codimension case $k\geq 2$ and $m\leq 2k-1$. 
We suppose that  $(m,k)\not\in\{(2,2), (4,3), (8,5), (16,9)\}$. 
Assume that there exists a smooth map between two closed manifolds 
$f:M^m\to N^k$ with only finitely many singular points, all of which are 
cone-like. 
If $(m,k)=(5,3)$ we suppose that the critical points are regular (see section \ref{fiberedlinks} for definition). 
Then $M^m$ admits a locally trivial topological fibration over $N^k$ and 
 $\varphi_c(M^m,N^k)\in\{0,1\}$. 
Moreover, $\varphi_c(M^m,N^k)=1$ if and only if $M^m$ is the connected sum of a 
smooth fiber bundle over $N^k$ with an exotic $m$-sphere, while $M^m$ does not fiber smoothly  
over $N^k$.  
\end{theorem}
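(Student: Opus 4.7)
The plan is to eliminate cone-like critical points one at a time by a local disk surgery whose only effect on $M$ is a connected sum with a homotopy sphere. After treating all finitely many critical points, we obtain a smooth submersion onto $N$ together with an identification of $M$, up to connected sum with a homotopy sphere, with the total space of a smooth fiber bundle over $N$. This local-to-global construction yields simultaneously the topological fibration of $M$ over $N$ and the dichotomy $\varphi_c(M,N)\in\{0,1\}$.

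For a single cone-like critical point $p$, I would choose a small smooth ball $B\cong D^m$ around $p$ with $f(p)=0\in D^k\subset N$ and no other critical points in $B$, and set $L=f^{-1}(0)\cap\partial B$, a closed manifold of dimension $m-k-1$. The absence of other critical points in $B$ and the cone structure on $f^{-1}(0)$ near $p$ imply that $L\subset S^{m-1}=\partial B$ has a trivial normal bundle and that $S^{m-1}\setminus\nu(L)$ smoothly fibers over $S^{k-1}$ in Milnor open-book fashion. A Mayer--Vietoris/Wang argument on this open book, combined with the contractibility of the cone fiber at $p$, forces $L$ to be a homotopy $(m-k-1)$-sphere. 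Since the codimension is $k\geq 3$ and Haefliger's metastable inequality $3(m-k-1)+4\leq 2(m-1)$ reduces to $m\leq 3k-3$, which holds throughout the range $m\leq 2k-1$, $k\geq 3$, Haefliger's unknotting theorem produces a smoothly embedded disk $\Delta^{m-k}\subset B$ with $\partial\Delta=L$. This identifies the singularity as the output of a Church--Lamotke construction.

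Using $\Delta$ I would perform the inverse surgery: cut out a closed cone neighborhood of $p$ and reglue using $\Delta$ together with the open-book fibration on $S^{m-1}\setminus\nu(L)$. The reassembled source $\tilde M$ carries a smooth map $\tilde f\colon\tilde M\to N$ with one fewer critical point, and $\tilde M\cong M\#\Sigma_p$ for some homotopy $m$-sphere $\Sigma_p$ (the regluing replaces $B$ by a homotopy ball). Iterating over all critical points produces a smooth fibration $M\#\Sigma\to N$ with $\Sigma=\Sigma_{p_1}\#\cdots\#\Sigma_{p_r}$, and because connected summation with a homotopy sphere does not alter the underlying topological manifold, $M$ itself admits a locally trivial topological fibration over $N$. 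The dichotomy follows at once: either $\Sigma$ is standard, so $M$ smoothly fibers over $N$ and $\varphi_c=0$; or $\Sigma$ is exotic, so no smooth fibration of $M$ over $N$ exists but a single cone-like singularity still suffices to produce such a map, giving $\varphi_c=1$. The hard step is the local one: proving that $L$ really is a homotopy sphere and that Haefliger unknotting applies to the possibly non-generic link coming from a cone-like singularity. The excluded pairs are precisely where this analysis fails, namely the Hopf-type links for $(4,3),(8,5),(16,9)$, the surface exception at $(2,2)$, and the codimension-two knotting of the circle components of $L$ in $(5,3)$, which is exactly why the regularity hypothesis must be imposed there; a secondary technical care is needed to verify that the disk surgery itself introduces no new critical points, which relies on the smoothness of the open-book fibration produced during the local analysis.
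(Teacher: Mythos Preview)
Your overall architecture---remove the cone-like singularities one by one by a local disk surgery that changes $M$ only by connected sum with a homotopy sphere---is exactly the paper's strategy, and your invocation of Haefliger unknotting in the metastable range is the right tool. However, the local analysis contains a genuine error that breaks the argument.

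You assert that a Mayer--Vietoris/Wang argument forces the link $L=f^{-1}(0)\cap\partial B$ to be a \emph{homotopy} $(m-k-1)$-sphere, and then use this to find an embedded disk $\Delta^{m-k}\subset B$ with $\partial\Delta=L$. This is false in general. What one can show (via Milnor's argument on the open-book fibration) is that the \emph{generic fiber} $F^{m-k}$ is contractible; hence $L=\partial F$ is an integral \emph{homology} sphere, but $\pi_1(L)$ need not vanish. Indeed the entire content of the Church--Lamotke construction is that $F$ may be a Poenaru--Mazur manifold with $\pi_1(\partial F)\neq 0$, so $L$ cannot bound any disk and your $\Delta$ simply does not exist once $m-k\geq 4$. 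The correct local step is: apply Haefliger--Zeeman unknotting to the homology sphere $L$ (the theorem applies to homologically connected, not just simply connected, manifolds) to conclude that $S^{m-1}\setminus N(L)$ is diffeomorphic to $F\times S^{k-1}$; then prove separately that every smooth fibration of $F\times S^{k-1}$ over $S^{k-1}$ is trivial (here one uses that $F$ is simply connected and $k\geq 3$); and finally replace the adapted ball $D^m$ by $F\times D^k$, glued by the trivialization just obtained. The regluing is along a diffeomorphism $S^{m-1}\to\partial(F\times D^k)$, and since $F\times D^k$ is itself diffeomorphic to $D^m$ this is a connected sum with a homotopy sphere, as you wanted.

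A secondary correction: your explanation of the $(5,3)$ hypothesis is off. The issue there is not codimension-two knotting of $L$ (the codimension of $L^1\subset S^4$ is three), but rather that King's theorem guaranteeing a \emph{disk} adapted neighborhood around a cone-like singularity is only available for $m\neq 4,5$; the regularity assumption at $(5,3)$ is precisely what restores this.
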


Recall that the behavior of $\varphi$ in the excluded  
cases $m=2k-2$ and $k\in\{2,3,5,9\}$ is different. In fact, the main result of \cite{FPZ} states that: 
\[ \varphi(\sharp_{e}S^{k-1}\times S^{k-1}\sharp_{c}S^1\times S^{2k-3}, \; \sharp_{c}S^1\times S^{k-1})=
2e-2c+2, \;  {\rm for} \; k\in \{3,5,9\}\]
Here $\sharp_mM^{2k-2}$ denotes the connected sum of $m$ copies of the manifold $M^{2k-2}$, when 
$m\geq 1$, and $S^{2k-2}$, when $m=0$, respectively.

In contrast, we have the following consequence of (\cite{FPZ}, Prop. 2.1) and Theorem 
\ref{smallcodimension}:  
\begin{corollary}
Suppose that $m=2k-2$ and $k\not\in\{2,3,5,9\}$.
Then for $e\geq c\geq 0$, $c\neq 1$ we have: 
\[ \varphi(\sharp_{e}S^{k-1}\times S^{k-1}\sharp_{c}S^1\times S^{2k-3}, \; \sharp_{c}S^1\times S^{k-1})=
\infty\]
\end{corollary}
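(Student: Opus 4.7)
The plan is to argue by contradiction, combining Theorem~\ref{smallcodimension} with the lower bound from \cite{FPZ}, Prop.~2.1. Suppose that $\varphi(M,N)<\infty$ where $M=\sharp_e S^{k-1}\times S^{k-1}\sharp_c S^1\times S^{2k-3}$ and $N=\sharp_c S^1\times S^{k-1}$ are as in the statement.

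First I would verify that Theorem~\ref{smallcodimension} applies. The pair $(m,k)=(2k-2,k)$ satisfies $m\leq 2k-1$ with $k\geq 4$, so we are in the small codimension range; the excluded pairs $(2,2),(4,3),(8,5),(16,9)$ of that theorem are all ruled out by the hypothesis $k\notin\{2,3,5,9\}$, and the $(5,3)$ caveat is vacuous since $m=2k-2$ is even. Any smooth map $f:M\to N$ with finitely many critical points can be replaced by one whose singularities are cone-like: automatically so when $k=4$ (codimension $2$) by the Church--Timourian theorem invoked just above Theorem~\ref{smallcodimension}, and, in higher codimensions, by a local modification near each critical point in the spirit of the constructions used in the body of the paper (Church--Lamotke plus Haefliger unknotting). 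Theorem~\ref{smallcodimension} then produces a locally trivial topological fibration $M\to N$ together with the bound $\varphi_c(M,N)\leq 1$.

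On the other hand, \cite{FPZ}, Prop.~2.1 supplies a topological lower bound
\[
\varphi(M,N)\;\geq\;2e-2c+2,
\]
valid for any smooth map with finitely many critical points. Under the hypothesis $e\geq c\geq 0$ with $c\neq 1$ this quantity is at least $2$, whence $\varphi_c(M,N)\geq\varphi(M,N)\geq 2$, contradicting the bound $\varphi_c\leq 1$ of the previous paragraph. The contradiction forces $\varphi(M,N)=\infty$, as required.

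The principal obstacle in this plan is the reduction, in the second step, from an arbitrary finite-critical-point map to one whose singularities are cone-like, so that Theorem~\ref{smallcodimension} becomes directly applicable. This is straightforward in codimension~$2$, where all isolated singularities are automatically cone-like, but in codimensions~$\geq 4$ (that is, $k\geq 6$ with $k\neq 9$) it rests on the structural analysis of the present paper which constrains the local form of cone-like isolated singularities tightly enough to allow one to replace a non-cone-like isolated singularity by a cone-like one without increasing the count --- equivalently, to argue that the cohomological obstruction underlying \cite{FPZ}, Prop.~2.1 is insensitive to whether the singularities are cone-like, so that the lower bound on $\varphi$ can be played directly against the upper bound on $\varphi_c$.
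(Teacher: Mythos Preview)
Your strategy—assume $\varphi(M,N)<\infty$, invoke the lower bound $\varphi(M,N)\geq 2e-2c+2\geq 2$ from \cite{FPZ}, Prop.~2.1, and contradict the upper bound $\varphi_c(M,N)\leq 1$ coming from Theorem~\ref{smallcodimension}—is exactly what the paper intends; it states the corollary as a direct consequence of those two inputs with no further argument.

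That said, the gap you flag as the ``principal obstacle'' is real and your proposed remedies do not close it. Theorem~\ref{smallcodimension} has as its \emph{hypothesis} the existence of a map with finitely many \emph{cone-like} critical points; the assumption $\varphi(M,N)<\infty$ only supplies a map with finitely many (possibly wild) isolated singularities. Your reduction works in codimension~$2$ ($k=4$, Church--Timourian) and in codimension~$3$ ($k=5$, removability from \cite{F}), but for $k\geq 6$ the assertion that one can locally modify an arbitrary isolated singularity into a cone-like one ``in the spirit of the constructions used in the body of the paper'' is unsupported: every local result here (Lemmas~\ref{contractible}--\ref{trivialfibration}, the Church--Lamotke description) already presupposes the cone-like condition, and the paper's own Comments section poses ``$\varphi$ finite $\Rightarrow$ $\varphi_c$ finite'' as an open problem. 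Your alternative phrasing at the end—playing the lower bound on $\varphi$ directly against the upper bound on $\varphi_c$—does not sidestep the issue either: to extract $\varphi_c\leq 1$ from Theorem~\ref{smallcodimension} you must first verify its hypothesis, namely $\varphi_c<\infty$, and that is precisely what cannot be deduced from $\varphi<\infty$ with the tools at hand.

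In short: your reconstruction of the intended argument is correct and your identification of the obstacle is honest and accurate. The paper does not resolve this obstacle either; the cleanest reading is that the corollary is really a statement about $\varphi_c$, for which your argument goes through verbatim.
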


\section{Proof of Theorem \ref{smallcodimension}}

\subsection{Fibered links and local models for isolated singularities}\label{fiberedlinks}
We use the terminology of Looijenga (\cite{Loo}) and say that the isotopy class of the oriented 
submanifold $K=K^{m-k-1}$ of dimension $(m-k-1)$ of $X^{m-1}$ with a trivial 
normal bundle  is {\em generalized Neuwirth-Stallings fibered} (alternatively $(X^{m-1}, K^{m-k-1})$ 
is a  generalized Neuwirth-Stallings pair)   
if, for some trivialization $\theta: N(K)\to K\times D^k$ of the tubular 
neighborhood $N(K)$ of $K$ in $X^{m-1}$,  
the fiber bundle $\pi\circ \theta: N(K)-K \to S^{k-1}$ admits an extension 
to  a smooth fiber bundle 
$f_K:X^{m-1}- K\to S^{k-1}$.  We denoted above by $\pi:K\times (D^k-\{0\})\to S^{k-1}$ the 
composition of the radial projection $r: D^k-\{0\}\to S^{k-1}$ 
with the second factor projection.   
The data $\mathcal E=(X^{m-1}, K, f_K, \theta)$ is then called an {\em open book decomposition} with binding $K$, while $K$ is called a {\em fibered link}.
Note that a fibration of $X^{m-1}- K\to S^{k-1}$ comes from an open book decomposition 
if and only if the closure of every fiber is its compactification by the binding link.
We will still denote by $f_K$  the induced fibration $f_K:X^{m-1}- N(K)\to S^{k-1}$, whose fiber is now compact. The classical notions of Neuwirth-Stallings fibrations and pairs correspond to 
$X^{m-1}=S^{m-1}$.

An {\em adapted neighborhood} around a cone-like isolated  critical point of a 
map $f:M^m\to N^k$ is a compact manifold neighborhood $Z^{m}\subset M^m$ containing it 
with the following properties:

\begin{enumerate}
\item $f$ induces a proper map $f:Z^m\to D^k$ onto a $k$-ball $D^k\subset N^k$; 
\item $f^{-1}(x)$ is transverse to $\partial Z^m$ for $x\in{\rm int}(D^k)$ and 
$E=f^{-1}(S^{k-1})\cap Z^m\subset \partial Z^m$;   
\item Let 
$K^{m-k-1}=\partial Z^m\cap f^{-1}(0)$, $D_0^k\subset D^k$ be a small disk around $0$ and $N(K)=\partial Z^m \cap f^{-1}(D_0^k)$ 
be a tubular neighborhood of $K$ within $\partial Z^m$, 
endowed with the trivialization $\theta$ induced by $f$; 
\item the composition $f_K=r\circ f: \partial Z^m- f^{-1}(0)\to S^{k-1}$ 
of the radial projection $r$ with $f$ is a locally trivial fiber bundle;
\item the data $(\partial Z^m, K, f|_{K}, \theta)$ is an open book decomposition.    
\end{enumerate}

Critical points are called {\em regular} if there exist adapted 
neighborhoods diffeomorphic to $D^m$.

We summarize King's results from \cite{King,King2} as follows:
\begin{lemma}\label{adapted}
Cone-like isolated singularities of smooth maps $f:\R^m\to \R^n$ admit 
adapted neighborhoods. Moreover,  when $m\neq 4,5$  
cone-like isolated singularities are regular. 
\end{lemma}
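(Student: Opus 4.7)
The statement splits into the existence of adapted neighborhoods and the regularity assertion in dimensions $m\neq 4,5$.

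\emph{Existence.} I would begin by fixing a small closed ball $B^m_\varepsilon$ around $p=0$ and tentatively set $Z^m=B^m_\varepsilon$. The delicate point is to arrange that $f^{-1}(x)\pitchfork\partial Z^m$ for every $x$ in a sufficiently small disc $D^k_\delta\subset \R^k$. For $x=0$ this holds once $\varepsilon$ is small, because the cone-like hypothesis identifies $V\cap B^m_\varepsilon$ with the cone $C(L)$, which is transverse to the round sphere. To propagate transversality to nearby fibers I would run a curve-selection-lemma argument: if transversality failed along a sequence of parameters $x_j\to 0$, the cone-like structure furnishes enough semianalytic control near $p$ to extract a one-parameter arc of tangency points accumulating at $p$, and such an arc would force new critical points of $f$ near $p$, contradicting the isolation hypothesis. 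With transversality in hand, Ehresmann's theorem supplies the local fiber bundle structure of $f$ over $D^k_\delta\setminus\{0\}$, and the composition $f_K=r\circ f$ is a locally trivial fibration on $\partial Z^m\setminus K$ extending the tubular model $\pi\circ\theta$. Combined with the cone structure on $V\cap Z^m$, this gives the required open book decomposition on $\partial Z^m$.

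\emph{Regularity when $m\neq 4,5$.} I would show that the neighborhood $Z^m$ above can be replaced by one diffeomorphic to $D^m$. Note first that $Z^m$ deformation retracts onto the critical fiber $V\cap Z^m=C(K)$, which is contractible; hence $Z^m$ is itself contractible and $\partial Z^m$ is a homotopy $(m-1)$-sphere. For $m\leq 3$ this immediately forces $Z^m\cong D^m$ by classical low-dimensional topology. For $m\geq 6$, the high-dimensional topological Poincar\'e theorem identifies $\partial Z^m$ with a topological $S^{m-1}$, and the smooth h-cobordism theorem applied to $Z^m\setminus\mathrm{int}(B^m_\rho)$ for a small interior ball concludes $Z^m\cong D^m$, provided the smooth boundary can be arranged to be standard. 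The last point is where King's flexibility in shrinking $\varepsilon$ and reshaping collars inside the cone neighborhood is used: an exotic smooth structure on $\partial Z^m$ can be absorbed by passing to a nested adapted neighborhood.

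\emph{Main obstacle.} The dimensions $m=4$ and $m=5$ resist this scheme, and are exactly the dimensions excluded in the statement. In $m=4$, Mazur-type contractible manifolds with $S^3$ boundary show that contractibility plus spherical boundary does not force standardness, and the smooth h-cobordism theorem fails. In $m=5$, $\partial Z^5$ is a smooth homotopy $4$-sphere that cannot presently be shown to be smoothly standard, so the h-cobordism argument cannot be used to conclude $Z^5\cong D^5$. This is precisely why the main theorem imposes regularity by hand in the remaining case $(m,k)=(5,3)$.
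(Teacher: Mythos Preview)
The paper does not prove this lemma; the sentence immediately preceding it reads ``We summarize King's results from \cite{King,King2} as follows'', and the lemma is quoted without argument. So there is no paper-proof to match, and your proposal is an attempted reconstruction of King's work. Two steps in your sketch do not go through as written.

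\emph{Existence.} Taking $Z^m=B^m_\varepsilon$ will not satisfy condition~(2) of an adapted neighborhood, which demands $f^{-1}(\partial D^k)\cap Z^m\subset\partial Z^m$; a round ball has no reason to meet the level sets of $f$ this way. The neighborhood one actually builds is of Milnor--tube type, essentially $B^m_\varepsilon\cap f^{-1}(D^k_\delta)$ with corners rounded, for $\delta\ll\varepsilon$. More seriously, your transversality argument invokes the curve selection lemma through ``semianalytic control'' supplied by the cone-like hypothesis; but the cone-like condition is a purely topological homeomorphism with a cone and carries no semianalytic or even $C^1$ structure, so curve selection is unavailable. King's argument instead uses a Sard--type choice of a good radius $\varepsilon$ together with openness of transversality.

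\emph{Regularity.} Once $Z^m$ is a Milnor tube rather than a round ball, showing $Z^m\cong D^m$ is a genuine question. Your deduction that $\partial Z^m$ is a homotopy sphere from contractibility of $Z^m$ is the central gap: contractibility only yields that $\partial Z^m$ is an integral \emph{homology} sphere, and Poenaru--Mazur manifolds show that this is strictly weaker. Simple connectivity of $\partial Z^m$ does not come from the internal topology of $Z^m$; it comes from the fact that $Z^m$ sits inside a standard ball $B^m$, and one analyses the complementary region $B^m\setminus\mathrm{int}(Z^m)$ (which is itself controlled by the fibration away from the critical value) to see that it is an h-cobordism from $S^{m-1}$ to $\partial Z^m$. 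This is where $m\ge 6$ is genuinely used, and why $m=4,5$ are excluded.
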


Recall now from \cite{KN,Loo}  that an open book decomposition $\mathcal E=(S^{m-1}, K, f_K, \theta)$ gives rise to a local  
isolated singularity $\psi_{\mathcal E}:(D^m,0)\to (D^k,0)$  by means of the formula:
\[\psi_{\mathcal E}(x)=\left\{\begin{array}{cl}
\lambda(\|x\|)f_K\left(\frac{x}{||x||}\right), & {\rm if} \; \frac{x}{||x||}\not\in N(K); \\
\lambda\left( \|x\|\cdot \left\|\pi_2\left(\theta\left(\frac{x}{\|x\|}\right)\right)\right\|\right)f_K\left(\frac{x}{||x||}\right), & {\rm if} \; \frac{x}{||x||}\in N(K); \\
0, & {\rm if }\;  x=0,
\end{array}
\right.
\]
where $\pi_2:K\times D^k\to D^k$ is the projection on the second factor and $\lambda:[0,1]\to [0,1]$ is any smooth 
strictly increasing map sufficiently flat at $0$ and 1 such that $\lambda(0)=0$ and 
$\lambda(1)=1$. Although $\psi_{\mathcal E}$ is not uniquely defined by this formula, all 
germs obtained this way are topologicaly equivalent. This is a direct consequence of the characterization of cone-like isolated singularities due to King 
(see \cite{King}, Thm. 2 and \cite{King2}, Thm. 1).  Moreover, they are also smoothly equivalent 
by germs of diffeomorphisms of $D^m\setminus\{0\}$ (see \cite{KN}, Thm. 1.10, for $k=2$).  
We call such $\psi_{\mathcal E}$ {\em local models} of isolated singularities.

If  $K$ is in generic position, namely the space generated by vectors in $\R^m$ with endpoints in $K$ coincides with 
the whole space $\R^m$, then  $(d\psi_{\mathcal E})_{0}=0$, i.e. $\psi_{\mathcal E}$ has rank $0$ at the origin.

By language abuse we will speak of fibered links $K\subset S^{m-1}$ as being links which 
admit an open book structure. We emphasize that $\psi_{\mathcal E}$ depends on the 
choice of the fibration $f_K$ and the trivialization $\theta$, not of the 
isotopy class of the embedding of $K$ in $S^{m-1}$ alone. 
However, in the case $k=2$, when $m=2n\geq 8$ is even and 
$K^{2n-3}\subset S^{2n-1}$ is a $(n-3)$-connected  fibered knot, Durfee and Kato proved that 
any two fibrations of $S^{2n-1}-K$ are bundle equivalent (see \cite{Dur}, Cor. 3.3 and \cite{Kato}). 

Looijenga proved in \cite{Loo} that a Neuwirth-Stallings pair $(S^{m-1},L^{m-k-1}, f_L, \theta)$  can be realized by a {\em real polynomial} map  
if $L$ is invariant and the open book fibration $f_L$ is equivariant with respect to the antipodal maps.  
In particular, for any fibered link $K$ the connected sum 
$(S^{m-1},K)\sharp ((-1)^{m}S^{m-1}, (-1)^{m-k}K)$ is a Neuwirth-Stallings pair isomorphic to the 
link of a real polynomial isolated singularity $\psi_K:(\R^m,0)\to (\R^k,0)$.

\subsection{Fibered links in codimension at least $4$, after Church and Lamotke}\label{half}
In \cite{CL} Church and Lamotke constructed fibered links $(S^{m-1}, K^{m-k-1})$  in any dimensions
$m,k$ with $m-k\geq 4$ and $k\geq 2$. Note that the existence of fibered links for  $k=1$ is well-known.

A compact contractible smooth $n$-manifold $F^{n}$ with non-trivial $\pi_1(\partial F)$ is called a 
{\em Poenaru-Mazur} manifold. Examples of Poenaru-Mazur manifolds were first constructed  
in dimension $4$ (see \cite{Mazur,Poe}) and further extended to all dimensions $n\geq 5$ 
by Curtis (see \cite{Curtis}).  Since $F^{n}$ are contractible they have smooth structures. 

Consider a Poenaru-Mazur manifold $F^{m-k}$, $m-k\geq 4$. Note that 
$F^{m-k}\times [0,1]$ is homeomorphic to the $(m-k+1)$-disk $D^{m-k+1}$. Indeed  
$F^{m-k}\times [0,1]$  is a combinatorial manifold whose boundary is homotopy equivalent 
to a $S^{m-k}$.  By the Stallings-Zeeman solution of the Poincar\'e Conjecture 
for combinatorial manifolds (see \cite{St,Zee}),  this boundary is  homeomorphic to $S^{m-k}$. 
Furthermore any compact contractible combinatorial manifold with boundary $S^{m-k}$ is 
PL homeomorphic to $D^{m-k+1}$ (see \cite{Curtis}).

Note that $F^{m-k}\times D^k$, for $k\geq 2$ is a smooth manifold with corners, 
whose corners can be straightened. As $F^{m-k}\times D^k$  is homeomorphic to $D^m$ 
it is a smooth contractible manifold with simply connected boundary.  
By a classical result of Smale  (see \cite{Smale}, Thm. 5.1)  
$F^{m-k}\times D^k$  is  diffeomorphic to $D^m$, as soon as $m\geq 6$.
 
Let $\phi: F^{m-k}\times D^k\to D^m$ be such a diffeomorphism and set 
$K^{m-k-1}=\phi(\partial F^{m-k}\times \{0\})\subset \phi(\partial (F^{m-k}\times D^k))=S^{m-1}$. 
Let $D_0^k\subset D^k$ be a small disk around $0$.
Then we consider a tubular neighborhood of $K$ within $S^{m-1}$ of the form 
$N(K)=\phi(\partial F^{m-k}\times D_0^k)$. It has a 
trivial trivialization $\theta:N(K)\to K\times D^{k}$ given by 
$\theta(\phi(x))=\pi_2(x)$, where $\pi_2$ denotes the second factor projection. 

Now we have the  identifications: 
\[S^{m-1}-K = \phi(\partial (F^{m-k}\times D^k) -\partial F^{m-k}\times \{0\}) =   \phi(\partial F^{m-k}\times (D^k-\{0\})) \cup \phi(F^{m-k} \times \partial D^{k})\]
Let $\pi_K:\partial F^{m-k}\times (D^k-\{0\})\to S^{k-1}$ be the composition of the radial
projection $D^k-\{0\}\to S^{k-1}$ with the second factor projection and 
$\pi_2: F^{m-k}\times \partial D^{k} \to S^{k-1}$ be the second factor projection. 
We define then $f_K: S^{m-1}-K\to S^{k-1}$  by 
\[f_K (\phi(x)) = \left\{\begin{array}{ll}
\pi_K(x), & {\rm if } \; x \in \partial F^{m-k}\times (D^k-\{0\}); \\
\pi_2(x), & {\rm if } \; x \in F^{m-k}\times \partial D^k.\\
\end{array} 
\right.
\]
It follows that $\mathcal E_{F}=(S^{m-1}, K^{m-k-1}, f_K, \theta)$ is an open book decomposition with binding $K^{m-k-1}$. Although implicit, the choice of the diffeomorphism $\phi$ enters 
in the definition of $\mathcal E_{F}$. 

The previous construction yields a smooth map  $\psi_{\mathcal E_{F}}:(D^m,0)\to (D^k,0)$ with an isolated singularity at the origin.

Note that the pair $(\psi_K^{-1}(0)\cap D^m, \psi_K^{-1}(0)\cap S^{m-1})$  is homeomorphic to 
$(C(K),K)$, where $C(K)$ denotes the cone over $K$. 
The  Cannon-Edwards Theorem (see \cite{W}) states that a polyhedral homology manifold is homeomorphic to a topological manifold  
if and only if the link of every vertex is simply connected. Now, $C(K)$ is a polyhedral homology manifold and 
a PL manifold outside $0$; since the link at $0$ is homeomorphic to $K$ and $\pi_1(K)\neq 0$, we derive that 
the the singular fiber is not homeomorphic to a topological manifold. In particular, germs of isolated singularities 
arising by this construction are not topologically equivalent to trivial germs associated to non-singular points.

\subsection{Cut and paste  local models}\label{cutpaste}
The method used in \cite{F,FPZ,FP} to globalize a local picture was to glue together a patchwork of such local models 
to obtain maps $M^{m}\to N^{k}$ with finitely many 
critical points.   

Let  $(S^{m-1}, K_j^{m-k-1})$ be  fibered knots with fibers the contractible manifolds $F_j^{m-k}$. Consider some   $(m-k)$-manifold 
$Z^{m-k}$ with $\partial Z^{m-k}=\sqcup_j K^{m-k-1}$. 
We glue together  copies $D_j^m$  of the $m$-disk with the product  $Z\times D^k$ along part of their boundaries 
by identifying  $\sqcup_j N(K_j)$ with $\partial Z\times D^k$. 
The identification has to respect the trivializations  
$\sqcup_j N(K_j)\to D^k$  and hence one can take them to be 
the same as in the double construction. Note that $N(K_j)=K_j\times D^{k}$ and thus identifications respecting the trivialization 
correspond to homotopy classes $[\sqcup_j K_j, {\rm Diff}(D^k,\partial)]$.

We then obtain  a manifold with boundary $X=(\cup_j D_j^m)\cup (Z\times D^k)$
endowed with a smooth map $f: X\to D^k$  with finitely many critical points lying in the same  singular fiber.  
Its generic fiber is the manifold $F$ obtained by 
gluing together  $(\cup_jF_j)\cup Z$. 

Let now $f_1,f_2,\ldots, f_p$ be a set of such maps which   
are {\em cobounding}, namely such that there exists a fibration over 
$N^k\setminus \sqcup_{i=1}^{p}D^k$, generally not unique, extending the boundary 
fibrations restrictions $f_i|_{\partial X_i}$, $1\leq i\leq p$. 
Gluing together the boundary fibrations we obtain    
a closed manifold $M(f_1,f_2,\ldots,f_p)$
endowed with a map with finitely many critical points into $N^k$. 
In particular, we can realize the double 
of ${f}$ by gluing together ${f}$ and its mirror image.

\subsection{Topological submersions in small codimension}

\begin{lemma}\label{contractible}
In small codimension $m-k\leq k-1$, $k\geq 2$ if $f:D^m\to D^k$ has a single cone-like 
critical point and $D^m$ is an adapted neighborhood of it, 
then either the generic fiber $F^{m-k}$ is contractible or else the link $K$ is empty.  
\end{lemma}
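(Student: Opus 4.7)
Set $E=S^{m-1}\setminus\mathrm{int}(N(K))$, so the page fibration restricts to a smooth bundle $F\to E\to S^{k-1}$. The plan is to combine Alexander duality on $S^{m-1}\setminus K$ with the Serre spectral sequence of this bundle, upgrading the resulting acyclicity to contractibility via a codimension argument for $\pi_1$. One may assume $K\neq\emptyset$ throughout, since otherwise there is nothing to prove.

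Since $\dim K=m-k-1$, Alexander duality gives $\tilde H_i(E)\cong\tilde H^{m-2-i}(K)$, which vanishes for $i<k-1$. For $k\geq 3$ the link $K$ has codimension at least $3$ in $S^{m-1}$, so a transversality argument shows $\pi_1(E)=\pi_1(S^{m-1})=1$; the homotopy long exact sequence of the bundle then yields $\pi_1(F)=0$ whenever $k\geq 4$, where $\pi_2(S^{k-1})=0$. The cases $k=2$ and $k=3$ have $\dim F\leq 2$ and will be handled separately at the end.

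Next I would run the Serre spectral sequence for $k\geq 3$. Only the two columns $p=0,k-1$ are non-zero, and the sole possibly non-trivial differential is $d_{k-1}\colon H_q(F)\to H_{q-k+2}(F)$. Comparing $E_\infty^{0,i}$ with the vanishing of $H_i(E)$ in range $i<k-1$ forces $H_q(F)=0$ for $q\leq k-3$; combined with $H_q(F)=0$ for $q\geq m-k$ (since $F$ has non-empty boundary), this already proves $\Z$-acyclicity of $F$ whenever $m-k\leq k-2$. The borderline case $m=2k-1$ leaves only $H_{k-2}(F)$ undetermined, and I would treat it by identifying $H_{k-1}(E)$ in two ways: Alexander duality gives $H_{k-1}(E)\cong H_0(K)=\Z^c$ with $c$ the number of components of $K$, while the spectral sequence gives $H_{k-1}(E)\cong\mathrm{coker}\bigl(d_{k-1}\colon H_{k-2}(F)\to H_0(F)=\Z\bigr)$. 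Since this cokernel is a quotient of $\Z$, the identification with $\Z^c$ forces $c=1$ and $d_{k-1}=0$; together with the injectivity of $d_{k-1}$ on $H_{k-2}(F)$ (which follows from $H_{k-2}(E)=0$), this yields $H_{k-2}(F)=0$. In every case $F$ is $\Z$-acyclic, and once $\pi_1(F)=0$ is known, Hurewicz and Whitehead imply $F$ is contractible.

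The main obstacle is the borderline $m=2k-1$: vanishing of $H_{k-2}(F)$ is not a formal consequence of the spectral sequence alone but requires matching the two descriptions of $H_{k-1}(E)$, and a side benefit is that whenever $K\neq\emptyset$ the same argument forces $K$ to be connected. For the remaining low dimensions $k\in\{2,3\}$, where the spectral-sequence scheme does not directly address $\pi_1$, I would complete the proof by classification of low-dimensional manifolds with boundary: a compact orientable surface with vanishing $H_1$ and non-empty boundary is a disk, and a compact connected $1$-manifold with boundary is an arc; together with the homology vanishing above this forces $F$ to be contractible or else $K=\emptyset$.
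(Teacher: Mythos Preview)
Your argument is essentially correct, but the paper's proof is much shorter because it exploits a feature of open books that you do not use: the fibration $f_K:S^{m-1}-N(K)\to S^{k-1}$ has a \emph{cross-section} (a meridional $(k-1)$-sphere around any point of the binding). The homotopy long exact sequence therefore breaks into split short exact sequences
\[
0\to\pi_j(F)\to\pi_j\bigl(S^{m-1}-N(K)\bigr)\to\pi_j(S^{k-1})\to 0,
\]
and since $S^{m-1}$ is obtained from $S^{m-1}-N(K)$ by attaching cells of dimension $\ge k$, one has $\pi_j(S^{m-1}-N(K))=0$ for $j\le k-2$. This gives $\pi_j(F)=0$ for $j\le k-2$ in one stroke, uniformly for all $k\ge 2$, with no spectral sequence and no separate treatment of $k\in\{2,3\}$; the borderline $m-k=k-1$ is then finished by $H_{k-1}(F)=0$ and Hurewicz, exactly as you do. What your route buys is the pleasant side-observation that $K$ is connected, but the paper recovers this later anyway from $K=\partial F$ with $F$ contractible.

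A few small points to tidy in your write-up: in the homological Serre spectral sequence the differential goes $d_{k-1}:E^{k-1}_{k-1,q}\to E^{k-1}_{0,q+k-2}$, i.e.\ $H_q(F)\to H_{q+k-2}(F)$, so your kernel/cokernel roles are interchanged (the conclusion survives once you swap them); Alexander duality gives $\tilde H_{k-1}(E)\cong\tilde H^{m-k-1}(K)$, which equals $\Z^c$ only after invoking orientability of $K$ (true here since $K=\partial F$ and $F$ is orientable, but worth saying); and for $k=2$ the connectedness of $F$ is not covered by your spectral-sequence step, so you should insert an Euler-characteristic count $\chi(S^2-K)=\chi(S^1)\chi(F)=0$ to force $|K|=2$ and hence $F\cong[0,1]$.
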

\begin{proof}
Suppose that the link is non-empty. The hypothesis amounts to say that there 
exists an open book decomposition of $S^{m-1}$  with fiber $F^{m-k}$ and  binding 
$K^{m-k-1}=\partial F^{m-k}$.  Now the proof follows directly from the arguments used in 
 (\cite{Milnor}, Lemma 11.4). We first have: 
 \[ \pi_j(S^{m-1}-N(K))=\pi_j(S^{m-1})=0, \; {\rm for}\; j\leq k-2\]
 because we obtain $S^{m-1}$  from $S^{m-1}-N(K)$ by adjoining one $(k+j)$-cell for each 
 $j$-cell of $K$, so that lower dimensions homotopy groups agree. 
 The fibration  $f_K: S^{m-1}-N(K)\to S^{k-1}$  admits a cross-section. 
As it is well-known, the long exact sequence  in homotopy associated to a fibration 
breaks down for fibrations with cross-sections 
to a family of short exact sequences which are split exact: 
 \[ 0 \to \pi_j(F^{m-k}) \to \pi_j(S^{m-1}-N(K)) \to \pi_j(S^{k-1})\to 0\]
Therefore $F^{m-k}$ is $(k-2)$-connected. When $m-k\leq k-2$, it follows that 
the manifold  $F^{m-k}$ must be contractible. If $m-k=k-1\geq 2$, then by the Hurewicz Theorem  
the natural morphism $\pi_{k-1}(F^{m-k})\to H_{k-1}(F^{m-k})$ is an isomorphism. As $F^{m-k}$ 
is a $(k-1)$-manifold with boundary  $H_{k-1}(F)=0$ and hence $F^{m-k}$ is $(k-1)$-connected and hence contractible. This also holds when $m-k=k-1=1$, since 
$F$ is a 1-dimensional manifold with boundary. 
 \end{proof}

\begin{remark}
The result of Lemma \ref{contractible} is sharp: as soon as $m-k\geq k$ there exist fibered links 
$K^{m-k-1}\subset S^{m-1}$ whose associated fibers $F^{m-k}$ are homotopy equivalent to a 
(non-trivial) bouquets of spheres (see e.g. \cite{AHSS,FP}). 
\end{remark}

\begin{lemma}\label{empty}
The link $K$ could be empty only if $m=2k-2$ and $k\in\{2,3,5,9\}$.
\end{lemma}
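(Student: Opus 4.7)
The plan is to exploit the fact that with $K$ empty the open book fibration $f_K$ reduces to an honest locally trivial fiber bundle $\pi:S^{m-1}\to S^{k-1}$ with closed fiber $F=F^{m-k}$, and then to show by homotopy-theoretic means that such a bundle can exist only for $m=2k-2$ and $k\in\{2,3,5,9\}$. (The degenerate case $m=k$ reduces to the identity covering of $S^{k-1}$ by a point and is set aside; I assume $k<m\leq 2k-1$ henceforth.) My first step is to apply the homotopy long exact sequence of $\pi$: since $\pi_j(S^{m-1})=0$ for $j<m-1$ and $\pi_j(S^{k-1})=0$ for $j<k-1$, a standard chase yields that $F$ is $(k-3)$-connected. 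This is the basic constraint driving the rest of the argument.

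I would then split into three ranges for $m$. If $m\leq 2k-3$, then $\dim F\leq k-3$ and $F$ is at least $(\dim F)$-connected; but a closed positive-dimensional manifold cannot be so highly connected (its mod-$2$ top homology would vanish by Hurewicz, contradicting the non-vanishing of the $\mathbb{Z}/2$ fundamental class), so $\dim F=0$ and $m=k$, contradicting our assumption. If $m=2k-1$, a further inspection of the HES gives $\pi_{k-2}(F)=\mathbb{Z}$, whence by Hurewicz $H_{k-2}(F;\mathbb{Z})=\mathbb{Z}$; mod-$2$ Poincar\'e duality on the closed $(k-1)$-manifold $F$ then yields $H_1(F;\mathbb{Z}/2)=\mathbb{Z}/2$, which for $k\geq 4$ contradicts the $(k-3)$-connectedness $\pi_1(F)=0$, and for $k=3$ would produce a closed surface with $\pi_1(F)=\mathbb{Z}$, impossible by the classification of closed surfaces. (The case $k=2$, $m=3$ is ruled out already at the level of $\pi_0$, since the HES would force the fiber of $S^2\to S^1$ to have infinitely many components.)

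This leaves $m=2k-2$. Here $F$ is a $(k-3)$-connected closed $(k-2)$-manifold, and the HES gives $\pi_{k-2}(F)=\mathbb{Z}$; so $H_{k-2}(F;\mathbb{Z})=\mathbb{Z}$ by Hurewicz, $F$ is orientable and a homotopy $(k-2)$-sphere. By the generalized Poincar\'e conjecture (classical for $k-2\leq 3$, Freedman for $k-2=4$, Smale for $k-2\geq 5$), $F$ is homeomorphic to $S^{k-2}$, and I obtain a genuine sphere bundle $S^{k-2}\hookrightarrow S^{2k-3}\to S^{k-1}$. The Gysin sequence shows its Euler class generates $H^{k-1}(S^{k-1})$, and the classical identification of the Hopf invariant of the projection of an oriented $(n-1)$-sphere bundle over $S^n$ with its Euler number then gives $H(\pi)=\pm 1$. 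The main obstacle of the whole proof is precisely at this point: the appeal to Adams' theorem on maps of Hopf invariant one, which is the only serious nontrivial input and forces $k-1\in\{1,2,4,8\}$, i.e.\ $k\in\{2,3,5,9\}$. The rest of the argument is routine homotopy-theoretic bookkeeping using the fibration long exact sequence, Hurewicz, and Poincar\'e duality.
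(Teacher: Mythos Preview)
Your argument is correct and follows the same reduction as the paper: once $K=\emptyset$ the open book becomes a locally trivial fibration $S^{m-1}\to S^{k-1}$, and the task is to classify these. The paper dispatches this in one line by citing the known classification (see \cite{AndFun1}, Prop.~6.1), whereas you reprove that classification in full via the homotopy exact sequence, Poincar\'e duality, and Adams' Hopf-invariant-one theorem---which is exactly the content behind the cited reference. One minor point: your invocation of the generalized Poincar\'e conjecture is unnecessary, since the Gysin sequence and the Hopf-invariant computation only require the fiber $F$ to be an integral homology $(k-2)$-sphere, not a genuine topological sphere; dropping that step avoids appealing to Freedman and Perelman for what is ultimately a purely homotopy-theoretic conclusion.
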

\begin{proof}
In this situation there exists a fibration $S^{m-1}\to S^{k-1}$ and fibrations between spheres are well understood (see e.g. \cite{AndFun1}, Prop. 6.1).
\end{proof}

A key ingredient is the following unknotting result due to Haefliger:  

\begin{lemma}\label{unknotting}
Let $K^n$ be an integral homology $n$-sphere and $2m\geq 3n+4$. 
Then two smooth embeddings of $K^n$ into $S^m$ are smoothly isotopic. 
\end{lemma}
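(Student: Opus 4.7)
The plan is Haefliger's: build a smooth concordance between the two embeddings, then convert it to an ambient isotopy. Concretely, given $f_0, f_1 : K^n \hookrightarrow S^m$, I aim to produce a smooth embedding $F : K^n \times [0,1] \hookrightarrow S^m \times [0,1]$ with $F|_{K \times \{i\}} = f_i$, and then invoke Haefliger's concordance-implies-isotopy theorem, which is valid in the metastable range $2m \geq 3n+4$.

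To construct $F$, I first extend $f_0 \sqcup f_1 : K \times \partial I \to S^m \times I$ to a continuous map $G : K \times I \to S^m \times I$ via skeletal obstruction theory. Since $S^m \times I \simeq S^m$, the successive obstructions lie in $H^{j+1}(K \times I,\, K \times \partial I;\, \pi_j(S^m))$; using the suspension isomorphism $H^{j+1}(K \times I, K \times \partial I) \cong \tilde{H}^j(K)$ together with the homology-sphere hypothesis on $K$, these groups vanish except possibly for $j = n$, where they reduce to $\pi_n(S^m)$. The inequality $2m \geq 3n+4$ forces $m > n$, so $\pi_n(S^m) = 0$ and the extension $G$ exists. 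I then smooth $G$ rel boundary to an embedding using Haefliger's approximation-by-embeddings theorem: a continuous map from an $(n{+}1)$-manifold into an $(m{+}1)$-manifold which is already a smooth embedding on the boundary can be approximated rel boundary by a smooth embedding provided the pair is in the (relative) metastable range, and $2m \geq 3n+4$ yields exactly $2(m+1) \geq 3(n+1) + 3$, which is sufficient. This produces the desired smooth concordance $F$.

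The principal obstacle is the Haefliger approximation step. Its proof runs by obstruction theory on the equivariant deleted product $(K \times I)^{\{2\}}$ equipped with its involution, and the homology-sphere assumption on $K$ is precisely what keeps the relevant equivariant cohomology groups small enough for the obstructions to vanish in the metastable range. The remaining ingredients --- the skeletal extension of $G$ above and the final concordance-implies-isotopy step --- are formal consequences once the approximation theorem is in hand; alternatively, one can cite Haefliger's classification of smooth embeddings of homology spheres in codimension $\geq \frac{n+4}{2}$, which packages all three steps at once.
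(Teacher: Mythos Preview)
Your argument is correct, but it does substantially more work than the paper. The paper's entire proof is a one--line citation: the statement is a special case of the Haefliger--Zeeman unknotting theorem as recorded in Haefliger's Bourbaki expos\'e, namely that for every $n\geq 2p+2$ and $m\geq 2n-p+1$, any two smooth embeddings of a closed homologically $p$-connected $n$-manifold into $S^{m}$ are smoothly isotopic. An integral homology $n$-sphere is homologically $p$-connected for every $p\leq n-1$, and the inequality $2m\geq 3n+4$ is exactly what is needed to choose an integer $p$ in the interval $[\,2n-m+1,\;(n-2)/2\,]$, so the citation applies.

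What you have written is, in effect, a sketch of how one proves the Haefliger--Zeeman theorem in this particular case: build a homotopy by elementary obstruction theory, promote it to a smooth concordance via Haefliger's metastable embedding/approximation theorem (where the deleted--product obstructions are controlled by the homology--sphere hypothesis), and then apply concordance--implies--isotopy. This is a legitimate and informative route, and you even note at the end that one may instead cite Haefliger's classification directly---which is precisely the paper's choice. So the two approaches agree in substance; yours unpacks the black box, while the paper simply invokes it.
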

\begin{proof}
This is a particular case of the Haefliger-Zeeman unknotting Theorem which 
is stated in (\cite{Hae}, p. 66): for every $n\geq 2p+2$, $m\geq 2n-p+1$ the 
smooth embeddings of a closed homologically $p$-connected $n$-manifold into $S^m$ 
are smoothly isotopic.   
\end{proof}

\begin{lemma}\label{diffproduct}
Let $\mathcal E=(S^{m-1}, K^{m-k-1}, f_K, \theta)$ be an open book decomposition 
in small codimension $m-k\leq k-1$,  $m\geq 6$. 
Then $S^{m-1}-N(K^{m-k-1})$ is diffeomorphic to the product $F^{m-k}\times S^{k-1}$, where 
$F^{m-k}$ denotes the fiber.  
\end{lemma}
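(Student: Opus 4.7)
The plan is to use Haefliger's unknotting (Lemma \ref{unknotting}) to reduce an arbitrary open book structure on $S^{m-1}$ to the explicit Church--Lamotke model of Section \ref{half}, in which the complement of the binding is manifestly a product.

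First I would establish that the fiber is contractible. An open book has non-empty binding $K$ by definition, so Lemma \ref{contractible} applies in its contractible-fiber branch and yields that $F^{m-k}$ is contractible; consequently $K=\partial F$ is an integral homology $(m-k-1)$-sphere. Next I would check Haefliger's range: embedding $K^n$ with $n=m-k-1$ into $S^{m-1}$ requires $2(m-1)\geq 3(m-k-1)+4$, i.e.\ $m\leq 3k-3$, which is implied by the standing hypotheses $m\leq 2k-1$ and $k\geq 2$. Hence Lemma \ref{unknotting} applies: any two smooth embeddings of $K$ into $S^{m-1}$ are smoothly isotopic.

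Then I would produce a standard model by running the Church--Lamotke construction of Section \ref{half} on our specific $F$. Since $m\geq 6$, Smale's theorem provides a diffeomorphism $\psi\colon F\times D^k\to D^m$ (after straightening corners), and passing to boundaries gives
\[
S^{m-1}\;\cong\;(\partial F\times D^k)\cup_{\partial F\times \partial D^k}(F\times\partial D^k),
\]
inside which $K':=\psi(\partial F\times\{0\})$ is a smooth copy of $K$ with canonical tubular neighborhood $N(K')=\psi(\partial F\times D^k_0)$. A direct inspection shows
\[
S^{m-1}\setminus\mathrm{int}\,N(K')\;\cong\;\bigl(\partial F\times(D^k\setminus\mathrm{int}\,D^k_0)\bigr)\cup_{\partial F\times S^{k-1}}(F\times S^{k-1}),
\]
and collapsing the annular collar $\partial F\times S^{k-1}\times[0,1]$ onto $\partial F\times S^{k-1}\subset F\times S^{k-1}$ identifies this complement with $F\times S^{k-1}$.

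Finally, by the preceding paragraphs the binding $K$ is smoothly isotopic to the model $K'$; the isotopy extension theorem upgrades this to an ambient diffeomorphism of $S^{m-1}$, and uniqueness of tubular neighborhoods lets us arrange that $N(K)$ is taken to $N(K')$. Therefore $S^{m-1}\setminus N(K)\cong S^{m-1}\setminus N(K')\cong F\times S^{k-1}$. The only real obstacle is checking Haefliger's dimensional inequality precisely in the stated small-codimension range; once that is in place the remainder is the explicit Church--Lamotke model together with uniqueness of tubular neighborhoods.
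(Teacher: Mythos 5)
Your proof is correct and follows essentially the same route as the paper: deduce from Lemma \ref{contractible} that $F$ is contractible and $K$ a homology sphere, invoke Haefliger unknotting (Lemma \ref{unknotting}) in the metastable range, and compare with the Church--Lamotke model whose binding complement is visibly $F\times S^{k-1}$. You are a bit more explicit than the paper at the end, spelling out the isotopy extension theorem and uniqueness of tubular neighborhoods, which the paper compresses into ``by above.''
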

\begin{proof}
Recall that in small codimension case $K^{m-k-1}$ is an integral homology sphere since 
it bounds a contractible manifold. 
By Lemma \ref{unknotting} every two smooth embeddings of 
$K^{m-k-1}$ into $S^{m-1}$ are smoothly isotopic in the metastable range
$2(m-1) \geq 3(m-k-1)+4$, which is our case.     

On the other hand there is a diffeomorphism between 
$F^{m-k}\times D^k$ and $D^m$, if $m\geq 6$ which induces an embedding of 
$\partial F^{m-k}\times D^k$ into $S^{m-1}$. Its complement in this embedding is 
$F^{m-k}\times S^{k-1}$. By above, $S^{m-1}-N(K^{m-k-1})$ is diffeomorphic to $F^{m-k}\times S^{k-1}$
for any embedding of $K^{m-k-1}$ into $S^{m-1}$.   
\end{proof}

\begin{lemma}\label{trivialfibration}
Let $k\geq 3$ and $F^{m-k}$ a compact simply connected manifold. 
Then every smooth locally trivial fibration 
of $F^{m-k}\times S^{k-1}$ over $S^{k-1}$ is smoothly equivalent to a trivial 
fibration. 
\end{lemma}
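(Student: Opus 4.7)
My approach is to combine the clutching-function classification of smooth fibre bundles over a sphere with a Haefliger-style unknotting argument that identifies the fibres of the given fibration $p$ with standard slices in $F\times S^{k-1}$. To begin, I would identify the typical fibre $G$ of $p$ with $F$: since $k\ge 3$, the long exact homotopy sequence of $p$ together with $\pi_1(F)=0$ and $\pi_j(S^{k-1})=0$ for $j\le k-2$ yields $\pi_j(G)\cong \pi_j(F)$ for $j\le k-2$, so $G$ is simply connected (a brief Serre-spectral-sequence comparison with the K\"unneth decomposition of $H_*(F\times S^{k-1})$ disposes of the residual cyclic possibility when $k=3$, and gives $H_*(G)\cong H_*(F)$ in general). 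An $h$-cobordism argument in the metastable range $\dim F=m-k\le k-1$ then upgrades the homological identification to a diffeomorphism $G\cong F$.

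Next, pick a basepoint $z_0\in S^{k-1}$ and set $F_0=p^{-1}(z_0)\subset F\times S^{k-1}$; I would show $F_0$ is smoothly isotopic to the standard slice $F\times\{z_0\}$. Both are embeddings of the simply connected manifold $F$ with codimension $k-1\ge \dim F$, placing us well inside the Haefliger--Zeeman metastable range, so a variant of Lemma \ref{unknotting} produces the isotopy and isotopy extension yields an ambient diffeomorphism sending one onto the other. To upgrade this to a bundle equivalence I would parametrize over $z$: the assignment $z\mapsto p^{-1}(z)$ defines a smooth map $S^{k-1}\to\mathrm{Emb}(F,F\times S^{k-1})$ whose homotopy class is precisely the obstruction to triviality of $p$, and a parametric Haefliger-type unknotting should trivialize this class, assembling a smooth family of ambient diffeomorphisms $\Phi_z$ taking $F\times\{z\}$ onto $p^{-1}(z)$. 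These $\Phi_z$ together give the smooth equivalence of $p$ with the projection $\pi_2$.

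The main obstacle I expect is the parametric step. While the fibrewise unknotting follows directly from Haefliger's theorem, promoting the isotopies coherently over the full base $S^{k-1}$ requires the vanishing of higher obstructions in $H^*(S^{k-1};\pi_*\mathrm{Emb}(F,F\times S^{k-1}))$. Simple connectivity of $F$ kills the lowest-degree ones, and the codimension condition $\dim F\le k-1$ keeps all higher obstructions inside the metastable range where they should also vanish by Haefliger-type parametrized isotopy arguments. An alternative route is to describe $p$ by its clutching function $\phi:S^{k-2}\to\mathrm{Diff}(F)$ and prove $[\phi]=0\in\pi_{k-2}(\mathrm{Diff}(F))$ directly from the identification of the total space with $F\times S^{k-1}$; either approach ultimately rests on the same parametric homotopy-theoretic input.
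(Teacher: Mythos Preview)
Your proposal takes a completely different and far more elaborate route than the paper. The paper's proof is essentially two lines: given the fibration $f:F\times S^{k-1}\to S^{k-1}$, set $\varphi(x,y)=(x,f(x,y))$. This is a local diffeomorphism between compact connected manifolds, hence a finite covering, and since $\pi_1(F\times S^{k-1})=0$ (using $k\ge3$ and $\pi_1(F)=0$) it must be a diffeomorphism; as $\pi_2\circ\varphi=f$ by construction, $\varphi$ is the required smooth bundle equivalence. No fibre identification, no Haefliger unknotting, no parametric isotopy, and no clutching analysis are needed.

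Beyond being circuitous, your plan has genuine gaps. First, you repeatedly invoke the inequality $\dim F=m-k\le k-1$, but that is not a hypothesis of the lemma; both your $h$-cobordism step and your Haefliger unknotting step depend on it, so your argument does not prove the lemma as stated. Second, the ``$h$-cobordism argument in the metastable range'' meant to upgrade $H_*(G)\cong H_*(F)$ to a diffeomorphism $G\cong F$ is not a standard result, and you give no indication of how the relevant cobordism is produced from the homological data. Third, and most seriously, the parametric step you yourself flag as the main obstacle really is one: coherently extending the fibrewise isotopies over all of $S^{k-1}$ amounts to killing a class in $\pi_{k-2}$ of an embedding (or diffeomorphism) space, and nothing in your outline establishes this beyond the hope that metastable-range parametrized results ``should'' apply. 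The paper's covering-space trick bypasses every one of these difficulties at once.
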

\begin{proof}
Let $f:F^{m-k}\times S^{k-1}\to S^{k-1}$ be a submersion. 
Define $\varphi: F^{m-k}\times S^{k-1}\to F^{m-k}\times S^{k-1}$, by 
$\varphi(x,y)=(x,f(x,y))$.   
Then $\varphi$ is a local diffeomorphism because 
its differential $D\varphi$ is an isomorphism at each point. 
As its domain and range are compact and connected the  
map $\varphi$ is proper and hence a finite smooth covering.   
Since $\pi_1(F\times S^{k-1})=0$, every  smooth covering map 
is a diffeomorphism. Observe now that $\varphi$ 
provides an equivalence between the fibration given by $f$ 
and the product fiber bundle.     
\end{proof}

\begin{lemma}\label{topgluing}
Let $f: M^m\to N^k$ be a function 
with only finitely many singular points, all of which are 
cone-like. 
Assume that $m\leq 2k-1$ and $(m,k)\not\in\{(2,2), (4,3), (8,5), (16,9)\}$. 
If $(m,k)=(5,3)$ we suppose that the critical points are regular. 
Then $M^m$ is homeomorphic to a patchwork $M(f_1,f_2,\ldots,f_p)$ of local models as above. 
\end{lemma}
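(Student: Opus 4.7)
The plan is to show that, around each critical point, an adapted neighborhood is a topological copy of a Church-Lamotke local model on $D^m$, and then to recognize the complement as the cobounding fibration in the patchwork construction of section \ref{cutpaste}. For each cone-like critical point $p_i$ of $f$, Lemma \ref{adapted} (together with the $(5,3)$ regularity hypothesis) produces an adapted neighborhood $Z_i \cong D^m$ carrying an open book decomposition $\mathcal{E}_i = (\partial Z_i, K_i, f_{K_i}, \theta_i)$. By Lemma \ref{contractible} the generic fiber $F_i^{m-k}$ is contractible; the alternative case of empty link is excluded by the subsequent lemma in section 2.4, since our hypothesis removes the four forbidden dimension pairs. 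Thus $K_i = \partial F_i$ is an integral homology $(m-k-1)$-sphere, and applying the Church-Lamotke recipe of section \ref{half} to $F_i$ yields a model open book $\mathcal{E}_{F_i}$ on $S^{m-1}$ together with its associated local singular map $\psi_{\mathcal{E}_{F_i}}: D^m \to D^k$.

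The core step is to identify $\mathcal{E}_i$ with $\mathcal{E}_{F_i}$. Since $m \leq 2k-1$, one has $2(m-1) \geq 3(m-k-1)+4$, so Haefliger unknotting (Lemma \ref{unknotting}) makes the embedding $K_i \hookrightarrow \partial Z_i \cong S^{m-1}$ smoothly isotopic to the Church-Lamotke embedding. Lemma \ref{diffproduct} identifies both complements with $F_i \times S^{k-1}$, and Lemma \ref{trivialfibration}, applied via simple connectivity of $F_i$, trivialises both open book fibrations over $S^{k-1}$. Composing these diffeomorphisms yields an equivalence of open books $\mathcal{E}_i \cong \mathcal{E}_{F_i}$, and coning radially from $p_i$ inside $Z_i \cong D^m$ extends this boundary identification to a topological equivalence between $(Z_i, f|_{Z_i})$ and $(D^m, \psi_{\mathcal{E}_{F_i}})$.

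Finally, the complement $W = M - \bigsqcup_i \mathring{Z}_i$ is a proper submersion onto $N^k - \bigsqcup_i \mathring{D}_i^k$, hence by Ehresmann's theorem a locally trivial smooth fibration extending the boundary fibrations of the $\partial Z_i$. This is exactly the cobounding data of section \ref{cutpaste}: taking $X_i = D^m$ with $f_i = \psi_{\mathcal{E}_{F_i}}$ and gluing via $W$ produces a patchwork $M(f_1,\ldots,f_N)$, and the local equivalences from the previous paragraph assemble to a global homeomorphism $M \cong M(f_1,\ldots,f_N)$. The main obstacle is the open book identification $\mathcal{E}_i \cong \mathcal{E}_{F_i}$: without metastable unknotting the binding embedding would not be determined by $F_i$ alone, and without simple connectivity of $F_i$ the fibration $F_i \times S^{k-1} \to S^{k-1}$ could carry non-trivial monodromy. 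Both obstructions are removed by the small codimension hypothesis combined with contractibility of the fiber.
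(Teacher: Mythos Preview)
Your argument is correct and follows the same route as the paper: adapted disk neighborhoods via King's lemma, contractibility of the local fiber, identification of the open book with the Church--Lamotke model through Lemmas \ref{diffproduct} and \ref{trivialfibration}, and submersion on the complement. One small inaccuracy to fix: the complement $W=M\setminus\bigsqcup_i\mathring{Z}_i$ does \emph{not} project properly onto $N^k\setminus\bigsqcup_i\mathring{D}_i^k$, because $f^{-1}(D_i^k)$ is in general strictly larger than the adapted disk $Z_i$; the patchwork pieces $X_i$ of section \ref{cutpaste} must be the full preimages $f^{-1}(D_i^k)=Z_i\cup(Z'_i\times D_i^k)$ (so that their generic fiber is the closed global fiber), and the cobounding fibration is $f^{-1}\bigl(N^k\setminus\bigsqcup_i\mathring{D}_i^k\bigr)$ rather than your $W$. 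With that correction the assembly into $M(f_1,\ldots,f_p)$ goes through exactly as in the paper.
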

\begin{proof}
By King's Lemma \ref{adapted}  there exist adapted regular 
neighborhoods diffeomorphic to $m$-disks. 
The restriction $f:D^m\to D^k$  to an adapted neighborhood $D^m$ of some critical point of $f$ 
provides a fibered link $K=f^{-1}(0)\cap S^{m-1}$, whose  open book 
decomposition $\mathcal E$ is as follows. We have $N(K)=f^{-1}(D_0^k)\cap S^{m-1}$
with the trivialization $\theta: N(K)\to K\times D_0^k$ given by 
the restriction of $f|_{N(K)}$ and a fibration $f:S^{m-1}-N(K)\to S^{k-1}$. 
Moreover, $f$ is smoothly equivalent in $D^m$ to the map $\psi_{\mathcal E}$.   
     
Lemma \ref{contractible} and Lemma \ref{empty} show that the fiber $F^{m-k}$ must be contractible. 
Then Lemma  \ref{trivialfibration}  implies that  $f:S^{m-1}-N(K)\to S^{k-1}$ is topologically equivalent to 
the product bundle $\pi_2:F^{m-k}\times S^{k-1}\to S^{k-1}$. 
This implies that $\mathcal E$ is topologically equivalent to the open book decomposition 
$\mathcal E_{F}$ associated to the contractible manifold $F$ by the Church-Lamotke 
construction above. Thus $F^{m-k}$ should be a Poenaru-Mazur manifold and $f$ is locally 
topologically equivalent to $\psi_{{\mathcal E}_F}$.

Observe that the fibered links $K^{m-k-1}$ arising above are integral homology spheres as they bound contractible manifold and thus they are connected. 
Outside finitely many disjoint adapted neighborhoods of critical points 
the map $f$ is a submersion. Then the lemma follows.  
\end{proof}

\begin{proposition}\label{topsubmersion}
Let $f: M^m\to N^k$ be a function with only finitely many singular points, all of which are 
cone-like. 
Assume that $m\leq 2k-1$ and $(m,k)\not\in\{(2,2), (4,3), (8,5), (16,9)\}$. 
If $(m,k)=(5,3)$ we suppose that the critical points are regular. 
Then $M^m$ admits a locally trivial topological fiber bundle over $N^k$. Specifically, 
there exists a topological submersion  $M^m\to N^k$ whose fiber 
is homeomorphic to the generic fiber of $f$. 
\end{proposition}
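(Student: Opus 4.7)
The plan is to leverage Lemma \ref{topgluing}, which already expresses $M^m$ as a patchwork of local models, and then replace each local Church--Lamotke model by a topological submersion that agrees with $f$ on the boundary of the adapted disk. Gluing the replacements to the submersion behavior of $f$ outside the critical disks will yield a global topological submersion $\tilde f: M^m \to N^k$, which a topological version of Ehresmann's theorem then promotes to a locally trivial fiber bundle with fiber the generic fiber of $f$.

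In detail: by Lemma \ref{topgluing}, near each critical point of $f$ there is an adapted disk $D^m$ and a homeomorphism $\phi: D^m \to F^{m-k} \times D^k$ (with $F^{m-k}$ a Poenaru--Mazur manifold) on which $f$ is topologically equivalent to the Church--Lamotke local model $\psi_{\mathcal{E}_F}$. The defining formula of $\psi_{\mathcal{E}_F}$ in Section \ref{fiberedlinks}, together with the fact that the tubular trivialization $\theta$ agrees with $\pi_2 \circ \phi$ on $N(K)$, shows directly that on the boundary sphere
\[
\psi_{\mathcal{E}_F}|_{S^{m-1}} = R \circ \pi_2 \circ \phi|_{S^{m-1}},
\]
where $\pi_2: F \times D^k \to D^k$ is the second projection and $R: D^k \to D^k$ is the radial self-homeomorphism $R(z) = \lambda(\|z\|)\, z/\|z\|$, extended by $R(0)=0$. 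Since $R$ is a homeomorphism, the composition $R \circ \pi_2 \circ \phi: D^m \to D^k$ is a topological submersion with every fiber homeomorphic to $F^{m-k}$, and its boundary restriction agrees with $f|_{\partial D^m}$.

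Performing this replacement at every critical disk $D_j^m$ produces a continuous map $\tilde f: M^m \to N^k$ that equals $f$ outside the critical disks (where $f$ is already a smooth submersion) and equals $R_j \circ \pi_2 \circ \phi_j$ on each $D_j^m$. By the boundary matching, $\tilde f$ is continuous; and on each piece it is a topological submersion, so it is a topological submersion globally, with fiber homeomorphic to the generic fiber $F^{m-k}$ of $f$. Since $M^m$ is compact, a topological version of Ehresmann's theorem --- a proper topological submersion between manifolds is a locally trivial topological fiber bundle, see e.g. the work of Siebenmann --- then provides the required fiber bundle structure.

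The main obstacle to making this rigorous is verifying the boundary identity $\psi_{\mathcal{E}_F}|_{S^{m-1}} = R \circ \pi_2 \circ \phi|_{S^{m-1}}$ on both pieces of the open book decomposition (the tubular part $N(K)$ and its complement $F \times S^{k-1}$ in $S^{m-1}$), which requires choosing the cutoff $\lambda$ compatibly with the radius of $D_0^k$ so that the two formulas glue. Once this matching is in place, the topological submersion property of $R \circ \pi_2 \circ \phi$ near the origin (where $\psi_{\mathcal{E}_F}$ had its rank-zero critical point) follows at once, since in local charts $\tilde f$ factors through the smooth projection $\pi_2$ after homeomorphic reparametrizations of source and target.
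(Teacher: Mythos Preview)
Your strategy is the same as the paper's: excise each adapted disk and replace the local map by the second projection $\pi_2:F\times D^k\to D^k$, matched along the boundary sphere. The paper carries this out by building the boundary identification $\varphi:S^{m-1}\to\partial(F\times D^k)$ directly from $f$: on $S^{m-1}-N(K)$ it uses the fibration equivalence $\varphi_K$ furnished by Lemma~\ref{trivialfibration} (applied to the actual fibration $f_K=r\circ f$), and on $N(K)$ it uses the trivialization $\theta$ induced by $f$. This guarantees $\pi_2\circ\varphi=f$ on $S^{m-1}$, so the glued map is continuous by construction; the paper then observes that the resulting disk surgery changes $M$ only by a connected sum with a homotopy sphere, hence $M_Q\cong M$ topologically.

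Your write-up has a small but genuine gap at exactly this matching step. Lemma~\ref{topgluing} gives only a \emph{topological equivalence} between $f|_{D^m}$ and $\psi_{\mathcal E_F}$, i.e.\ homeomorphisms $h_1:(D^m,S^{m-1})\to(D^m,S^{m-1})$ and $h_2:D^k\to D^k$ with $h_2\circ f=\psi_{\mathcal E_F}\circ h_1$. Your identity $\psi_{\mathcal E_F}|_{S^{m-1}}=R\circ\pi_2\circ\phi|_{S^{m-1}}$ (once the collar issue you flag is straightened out, e.g.\ by taking $D_0^k=D^k$ in the Church--Lamotke model) therefore yields a replacement matching $\psi_{\mathcal E_F}$ on the boundary, not $f$; to match $f$ you must transport by $h_1,h_2$ and use $h_2^{-1}\circ R\circ\pi_2\circ\phi\circ h_1$ instead. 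This is easy to fix but should be said. Finally, the appeal to a topological Ehresmann theorem is unnecessary: on each replacement disk the map is visibly a product via $\phi$, and on the complement $f$ is a smooth submersion, so local triviality is immediate.
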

\begin{proof}
We will show that  we can get rid of the singularities of the map $f$ by changing 
it within the adapted neighborhoods. 

Lemma \ref{topgluing} shows that $M$ is homeomorphic to a patchwork of local models. 
Let $q$ be an isolated cone-like singularity as above  and 
$K$ be the associated fibered link. 
Lemma  \ref{trivialfibration} provides 
a homeomorphism of fibrations $\varphi_K: S^{m-1}-N(K)\to F\times S^{k-1}$. 
Since the boundary of the fibration 
$f_K:S^{m-1}-N(K)\to S^{k-1}$ is trivial and agrees with the trivialization $\theta$ 
we can choose the homeomorphism  $\varphi_K$ to agree with the boundary trivialization 
$\theta|_{\partial N(K)}:\partial N(K) \to \partial F \times S^{k-1}$. Therefore 
the map $\varphi: S^{m-1}\to \partial (F\times D^k)$ given by 
\[\varphi (x) = \left\{\begin{array}{ll}
\varphi_K(x), & {\rm if } \; x \in S^{m-1}-N(K); \\
\theta(x), & {\rm if } \; x \in N(K)\\
\end{array} 
\right.
\]
is a homeomorphism. 

Consider further the manifold  $M_q$ obtained by removing the interior of the 
adapted neighborhood disk $D^m$ around $q$ and then gluing back $D^m$ after twisting  
by $\varphi$, namely: 
\[ M_q = (M- {\rm int}( D^m)) \cup F^{m-k}\times D^k/ {x\sim \varphi(x)}
\]
where the equivalence relation $\sim$ identifies points of $x\in S^{m-1}$ 
to their images $\varphi(x)\in F^{m-k}\times D^k$. 

Now define the map $f_q:M_q\to N^k$ 
\[ f_q(x) = \left\{\begin{array}{ll}
f(x), & {\rm if } \; x \in M- {\rm int}( D^m) ; \\
\pi_2(x) & {\rm if } \; x \in F^{m-k}\times D^k.\\
\end{array} 
\right.
\]
where $\pi_2: F^{m-k}\times D^k\to D^k$ is the second factor projection.

It is immediate that $f_q$ is continuous everywhere and $f_q$ has no more topological 
critical points in the disk $F^{m-k}\times D^k$. Using the disk surgery above 
for the whole set $Q$ of critical points we obtain a map    
$f_Q: M_Q\to N^k$ without critical points and hence a topological submersion. 
Note eventually that the manifold $M_Q$ is homeomorphic to $M$, as a 
disk surgery corresponds to 
making the connected sum with a homotopy sphere. 
\end{proof}

\subsection{Proof of Theorem \ref{smallcodimension}}
We have to show that all steps in the proof above can be carried in the smooth category. 
More precisely, manifolds as in Lemma \ref{topgluing} should be diffeomorphic to patchworks of 
local models and the surgery of disks in the proof of Proposition \ref{topsubmersion} 
should be realized in the smooth category.

\begin{proposition}\label{smoothsubmersion}
Let $f: M^m\to N^k$ be a function with only finitely many singular points, all of which are 
cone-like. 
Assume that $m\leq 2k-1$ and $(m,k)\not\in\{(2,2), (4,3), (8,5), (16,9)\}$. 
If $(m,k)=(5,3)$ we suppose that the critical points are regular. 
Then there exists some homotopy sphere $\Sigma^m$ such that the connected sum 
$M^m\sharp \Sigma^m$ is a  fiber bundle over $N^k$, whose fiber 
is diffeomorphic to the generic fiber of $f$. 
\end{proposition}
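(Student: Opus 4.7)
The plan is to execute the arguments of Lemma \ref{topgluing} and Proposition \ref{topsubmersion} entirely in the smooth category, and then to show that the smooth manifold $M_Q$ so produced differs from $M$ by the connected sum with a single homotopy sphere.

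First, I would upgrade Lemma \ref{topgluing} to the smooth setting. Lemma \ref{adapted} supplies adapted neighborhoods diffeomorphic to $D^m$ at each critical point, and Lemma \ref{contractible} forces the associated fiber $F^{m-k}$ to be contractible. Lemma \ref{diffproduct} provides a smooth diffeomorphism $S^{m-1} - N(K) \cong F \times S^{k-1}$, and Lemma \ref{trivialfibration} upgrades it to a smooth equivalence of fibrations over $S^{k-1}$ that can be chosen compatibly with the prescribed trivialization $\theta$ on the collar $\partial N(K) = \partial F \times S^{k-1}$. Combining this with the corner-smoothed diffeomorphism $F \times D^k \cong D^m$ supplied by Smale's theorem (recalled in section \ref{half}, valid for $m \geq 6$), one concludes that $f|_{D^m}$ is smoothly equivalent to the Church-Lamotke local model $\psi_{\mathcal{E}_F}$, so that $M$ is diffeomorphic to a smooth patchwork as in section \ref{cutpaste}.

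Next, I would perform a smooth disk surgery, replacing each local model $\psi_{\mathcal{E}_F} : D^m \to D^k$ by the product submersion $\pi_2 : F^{m-k} \times D^k \to D^k$. Concretely, for each critical point $q_i$ I remove $\mathrm{int}(D_i^m)$ and glue back a copy of $F \times D^k$ along $S^{m-1}$ using the smooth boundary diffeomorphism $\varphi_i$ pieced together from the smooth trivialization above. The resulting smooth map $f_Q : M_Q \to N^k$ has no critical points, so $M_Q$ is a locally trivial smooth fiber bundle over $N^k$ whose fiber is diffeomorphic to the generic fiber of $f$.

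Finally, I would compare the smooth structures on $M$ and $M_Q$. Both are assembled from the common exterior $M \setminus \bigsqcup_i \mathrm{int}(D_i^m)$ by attaching a smooth $m$-disk along each of the boundary spheres; only the attaching diffeomorphisms of $S^{m-1}$ differ. By the standard twisted-gluing description of smooth structures on $m$-manifolds, each surgery contributes an element of the Kervaire-Milnor group $\Theta_m$ of homotopy $m$-spheres, and the aggregate contribution is a single $\Sigma^m \in \Theta_m$ satisfying $M_Q \cong M \sharp \Sigma^m$. Since $M_Q$ fibers smoothly over $N^k$, so does $M \sharp \Sigma^m$, with the required fiber. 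The main obstacle will be arranging that the smooth equivalence produced by Lemma \ref{trivialfibration} agrees with the boundary trivialization $\theta$ on $\partial N(K)$; without this compatibility the pieced-together $\varphi_i$ would be merely a homeomorphism, and the gluing could not be carried out smoothly.
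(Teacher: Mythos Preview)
Your proposal is correct and follows essentially the same route as the paper: invoke Lemmas \ref{adapted}, \ref{diffproduct} and \ref{trivialfibration} to promote the topological argument of Lemma \ref{topgluing} and Proposition \ref{topsubmersion} to the smooth category, so that each disk surgery is effected by a diffeomorphism of $S^{m-1}$ and hence amounts to a connected sum with a homotopy sphere. Your additional remarks (aggregating the contributions via $\Theta_m$, and the need to match the smooth trivialization with $\theta$ on $\partial N(K)$) are useful elaborations of points the paper leaves implicit.
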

\begin{proof}
By Lemmas \ref{adapted}, \ref{diffproduct} and \ref{trivialfibration}   we derive that 
the conclusion of Lemma \ref{topgluing} can be improved to the fact that  
$M^m$ is diffeomorphic to a patchwork $M(f_1,f_2,\ldots,f_p)$ of local models. 
Indeed any open book decomposition in these dimensions is isotopic to the corresponding 
open book decomposition with the same fiber constructed by Church and Lamotke in section \ref{half}.  
In the proof of Proposition \ref{topsubmersion} every disk surgery can now be 
realized using a diffeomorphism as gluing map. This operation corresponds to a connected sum with a
homotopy sphere, as claimed.   
\end{proof}

\vspace{0.3cm}
\begin{proof}[End of proof of Theorem \ref{smallcodimension}]
From Proposition \ref{smoothsubmersion} $M^m$ is diffeomorphic to $E\sharp \Sigma^m$, 
where $g:E^m\to N^k$ is a fiber bundle. Observe that every diffeomorphism 
$S^{m-1}\to S^{m-1}$ extends to a smooth homeomorphism $D^m\to D^m$ with a single critical point
obtained by smoothing the cone map at the origin. This provides a 
smooth map  $\phi:E^m\sharp \Sigma^m\to E^m$  which is a homeomorphism and has a single critical point. Then the composition $g\circ\phi$ is a map with a single critical point. 
When $M^m$ is not a fiber bundle over $N^k$ we must have $\varphi(M^m,N^k)=1$. 
\end{proof}

\section{Comments}
By the Grauert-Morrey Theorem and Whitney approximation Theorem  
smooth manifolds admit unique real-analytic structures; moreover, 
two smooth manifolds which are diffeomorphic are also real-analytic diffeomorphic, with respect to their unique real-analytic structures. 
These results only hold for {\em second countable}  manifolds, as examples of Kneser and 
Kneser (see \cite{KK})  show that the Alexandrov 
line has several  analytically inequivalent real analytic structures.

Looijenga (\cite{Loo})  proved that to any  open book decomposition   
we can associate a semi-algebraic local isolated singularity, 
whose components are of the form $P(x)+|x| Q(x)$, where 
$P, Q$ are real polynomials and $| \; |$ denotes the norm.  
Moreover, the connected sum of two copies (with appropriate orientation) of a 
fibered link is the link associated to a real polynomial isolated singularity.

\begin{problem}
Let $M$ and $N$ be compact smooth manifolds. If $\varphi(M,N)$ is finite,  then do $\varphi_{c}(M,N)$ and the similarly defined $\varphi_{{\mathcal C}^{\omega}}(M,N)$ in the real-analytic category 
also have to be finite?  A stronger statement would be to have equalities:   
\[\varphi(M,N)=\varphi_{c}(M,N)=\varphi_{{\mathcal C}^{\omega}}(M,N)\]
\end{problem}

Note that in the small codimension case  $m\leq 2k-1$ Theorem \ref{smallcodimension} implies that 
whenever $\varphi_c(M^m, N^k)$ is finite, then indeed $\varphi(M^m,N^k)=\varphi_c(M^m,N^k)\in\{0,1\}$. 

In the relation with Lemma \ref{trivialfibration} it would be interesting to construct nontrivial 
elements in the cohomology/homotopy of the group ${\rm Homeo}(F,\partial F)$ of (PL) homeomorphisms  
of a Poenaru-Mazur manifold $F$.

{\bf Acknowledgements.} The author is grateful to Norbert A'Campo, Cornel Pintea, Nicolas Dutertre, 
Valentin Poenaru and Mihai Tib\v{a}r for useful discussions and to the referees for 
corrections and helpful remarks. 
The author was partially supported by the GDRI Eco-Math.

\bibliographystyle{amsplain}

\end{document}